\documentclass[11pt,letterpaper]{amsart}
\hoffset=-0.6in
\voffset=-0.6in
\textwidth=6in
\textheight=9in
\usepackage{amsmath}
\usepackage{amsthm}
\usepackage{amssymb}
\usepackage{amsfonts,mathrsfs}
\usepackage{stmaryrd}
\usepackage{amsxtra}  
\usepackage{epsfig}
\usepackage{verbatim}
\usepackage[all]{xy}

\usepackage{dsfont}
\usepackage{color}
\usepackage{mathtools}
\usepackage{tikz}
\usepackage{tikz-cd}
\usetikzlibrary{fpu}
\usetikzlibrary{matrix}

\usepackage{graphicx} 

\theoremstyle{plain}
\newtheorem{theorem}{Theorem}[section]

\newtheorem{lemma}[theorem]{Lemma}



\theoremstyle{definition}

\newtheorem{remark}[theorem]{Remark}



\usepackage{bm}



\title{Unbounded operators and the uncertainty principle}
\author{friedrich haslinger }

\thanks{Supported by the Austrian Science Fund (FWF) project  P36884.}
\address{Fakult\"at  f\"ur Mathematik\\ Universit\"at Wien\\
Oskar-Morgenstern-Platz 1\\
A-1090 Wien, Austria}
\email{friedrich.haslinger@univie.ac.at}

\subjclass[2020]{Primary 30H20, 32A36; Secondary 81S05}

\keywords{Uncertainty principle, Segal-Bargmann space, FBI transform}

\begin{document}
\parindent 0 pt
\maketitle
\begin{abstract} We study a variant of the uncertainty principle in terms of the annihilation and creation operator on generalized Segal Bargmann spaces, which are used for the FBI-Bargmann transform. In addtion, we compute the Berezin transform of these operators and indicate how to use spaces of entire functions in one variable to study the Szeg\H{o} kernel for hypersurfaces in $\mathbb C^2.$
    
\end{abstract}
\section{Introduction}
In the Segal-Bargmann space (also Fock space) $A^2(\mathbb C^n, e^{-|z|^2})$ of entire functions $f: \mathbb C^n \longrightarrow \mathbb C$ such that 
$$\|f\|^2 = \int_{\mathbb C^n} |f(z)|	^2 \, e^{-|z|^2} \, d\lambda (z) <\infty,$$
the differentiation operators $a_j(f)= \frac{\partial f}{\partial z_j}$ (annihilation) and the multiplication operators $a^*_j(f)= z_j f$  (creation) are unbounded, densely defined adjoint operators with 
$$\|f\|^2 \le \|a_j( f) \|^2 + \| a^*_j(f)\|^2$$
 for $f\in {\text{dom}}(a_j),$ which corresponds to the uncertainty principle, see \cite{folland/book}. 

The uncertainty principle in its general form states that if $A$ and $B$ are quantum observables (i.e. self-adjoint operators), the probability distributions  of $A$ and $B$ cannot both be concentrated near single points in any state $u$ such that
$$((AB-BA)u,u) \neq 0.$$
\begin{theorem} If $A$ and $B$ are self-adjoint operators on a Hilbert space $\mathcal H,$ then
$$\|(A-a)u \| \, \|(B-b)u \| \ge \frac{1}{2} \left | ((AB-BA)u,u) \right |$$
for all $u\in {\text{dom}}(AB) \cap {\text{dom}}(BA)$ and all $a,b \in \mathbb R.$
\end{theorem}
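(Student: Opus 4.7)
The plan is to reduce the inequality to the case $a = b = 0$ and then unwind the commutator inner product using self-adjointness, finishing with Cauchy--Schwarz. Writing $A' = A - a$ and $B' = B - b$, the operators $A'$ and $B'$ remain self-adjoint on $\mathrm{dom}(A)$ and $\mathrm{dom}(B)$ because $a,b \in \mathbb{R}$, while scalar multiples of the identity cancel out of any commutator, so $[A', B'] = AB - BA$. Hence both sides of the desired inequality transform compatibly, and it suffices to establish
$$\|Au\|\,\|Bu\| \;\ge\; \tfrac12 \bigl|((AB-BA)u,u)\bigr|$$
for self-adjoint $A, B$ and $u \in \mathrm{dom}(AB) \cap \mathrm{dom}(BA)$.

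Next I would rewrite the commutator inner product using self-adjointness. The hypothesis $u \in \mathrm{dom}(AB)$ forces $Bu \in \mathrm{dom}(A)$, so that $(ABu, u) = (Bu, Au)$; similarly $(BAu, u) = (Au, Bu) = \overline{(Bu, Au)}$. Subtracting,
$$((AB-BA)u, u) \;=\; (Bu, Au) - \overline{(Bu, Au)} \;=\; 2 i \,\mathrm{Im}\,(Bu, Au),$$
so the modulus on the right-hand side equals $2\,|\mathrm{Im}\,(Bu, Au)|$.

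The proof then concludes by the chain of estimates $|\mathrm{Im}\,(Bu, Au)| \le |(Bu, Au)| \le \|Au\|\,\|Bu\|$, the last by Cauchy--Schwarz. Dividing by two and reinstating the shifts by $a$ and $b$ yields the statement.

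The argument is elementary once the identity $((AB-BA)u,u) = 2i\,\mathrm{Im}\,(Bu,Au)$ is in hand, and there is no deep obstacle. The only subtlety is making sure the domain conditions support each appeal to self-adjointness when moving $A$ and $B$ across the inner product; this is exactly guaranteed by requiring $u \in \mathrm{dom}(AB) \cap \mathrm{dom}(BA)$, and is the reason why \emph{self-adjoint}, rather than merely \emph{symmetric}, is the natural hypothesis in the quantum-mechanical reading of the theorem.
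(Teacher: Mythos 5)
Your proof is correct and is exactly the standard argument: shift by real scalars (which leaves the commutator unchanged), use symmetry of $A$ and $B$ together with the domain hypotheses to rewrite $((AB-BA)u,u)$ as $2i\,\mathrm{Im}\,(Bu,Au)$, and finish with Cauchy--Schwarz. The paper itself states the theorem without proof, referring to Folland's book, where this same argument appears; the only quibble is your closing remark --- the inequality really only uses that $A$ and $B$ are \emph{symmetric} on their domains, so self-adjointness is the natural physical hypothesis rather than a logical necessity here.
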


We apply this result to the position operators $X_j(u) = x_j u$ and the momentum operators $D_ju = -i\frac{\partial u}{\partial x_j}$ on $L^2(\mathbb R^n)$ and get
\begin{equation}\label{uncert1}
\| (X_j-a)u \| \, \|(D_j-b)u \| \ge \frac{1}{2} \| u \|^2.
\end{equation}

With $a=b=0,$ and the inequality $\alpha \beta \le \frac{1}{2}(\alpha^2+ \beta^2)$ we get the following variant of the uncertainty principle

\begin{equation}\label{sec: uncert2}
\|X_j(u) \|^2 + \|D_j(u)\|^2 \ge  \|u\|^2, \, u\in {\text{dom}}(X_j) \cap {\text{dom}}(D_j).
\end{equation}
\vskip 0.5 cm
Actually, \eqref{uncert1} and \eqref{sec: uncert2} are equivalent.

We will describe this inequality in the Segal-Bargmann space (Fock space)
 $A^2(\mathbb C^n, e^{-|z|^2}).$

Let $a_j(f) = \frac{\partial f}{\partial z_j},$ where we take
$$f\in {\text{dom}} (a_j) = \{ f\in  A^2(\mathbb C^n, e^{-|z|^2}) :  \frac{\partial f}{\partial z_j} \in A^2(\mathbb C^n, e^{-|z|^2})  \}.$$

$a_j$ (annihilation)  is a densely defined, unbounded operator, its adjoint $a_j^*$ (creation) is given by $a_j^*(f)=z_jf$ and we have 
$$\|a_j^*(f)\|^2= \|a_j(f)\|^2 + \|f\|^2,$$
hence ${\text{dom}} (a_j)={\text{dom}} (a_j^*),$ and

\begin{equation}\label{fock}
\| f\|^2 \le \|a_j(f)\|^2 + \|a_j^*(f)\|^2, 
\end{equation}
for $f\in {\text{dom}} (a_j).$
In another similar context, this inequality is the so-called basic estimate for the corresponding complex Laplacian operator, see \cite{hasbook}, \cite{haslinger}, \cite{Has21}, \cite{Has22}, \cite{sonhas}, \cite{HS22}.

 Here we use the Bargmann transform $\mathcal B : L^2(\mathbb R^n) \longrightarrow
 A^2(\mathbb C^n, e^{-|z|^2}),$ which is given by 
 $$ \mathcal B (f) (z)= \pi^{-n/4} \int_{\mathbb R^n} f(x) \exp \big [- z^2/2+ \sqrt{2}zx -x^2/2)\big ]\,dx, \, f\in L^2(\mathbb R^n),$$
 here $z^2 = \sum_{j=1}^n z_j^2$ and $xz = \sum_{j=1}^n x_jz_j.$
 
 The Bargmann transform is a unitary operator\index{unitary operator} between $L^2(\mathbb R^n)$ and
 $A^2(\mathbb C^n, e^{-|z|^2}).$
 
  Concerning the operators $a_j$ and $a_j^*$ we have
 $$\mathcal B^* a_j \mathcal B = \frac{1}{\sqrt{2}}(X_j + iD_j )\, \ {\text{and}} \ \, \mathcal B^*a_j^* \mathcal B=\frac{1}{\sqrt{2}} (X_j-iD_j),$$
 
  So we get
$$ \|X_j(u)\|^2= \frac{1}{2} \|\mathcal B^* a_j \mathcal B(u)+ \mathcal B^*a_j^* \mathcal B (u) \|^2,$$
and 
$$ \|D_j(u)\|^2= \frac{1}{2} \|\mathcal B^* a_j \mathcal B(u)- \mathcal B^*a_j^* \mathcal B (u) \|^2.$$
Hence 
\begin{eqnarray*}
\|X_j(u) \|^2 + \|D_j(u)\|^2 &=&  \|\mathcal B^* a_j \mathcal B(u) \|^2+
 \|\mathcal B^* a^*_j \mathcal B(u)\|^2\\
&=&\| a_j\mathcal B(u) \|^2 + \| a^*_j\mathcal B(u) \|^2\\
&\ge & \|\mathcal B(u) \|^2 \\
&=& \|u \|^2,
\end{eqnarray*}
hence \eqref{fock} and \eqref{sec: uncert2} are equivalent.

 On the Segal-Bargmann space we have the following commutation relations:
$$[a_j,a_j^*] = a_ja_j^* - a_j^*a_j =I,$$
$j=1, \dots,n.$ 

In this paper we study the operators $a_j$ and its adjoints $a_j^*$ in more general weighted spaces of entire functions which play a crucial role in semiclassical analysis and quantization (see \cite{RSN}, \cite{Z}). In sake of simplicity we will carry out all computations for the case $n=2,$ as the general case contains no further difficulties. 

Let 
$$A=\begin{pmatrix}\beta_1 & \kappa\\ \kappa & \beta_2 \end{pmatrix}, $$
with $\beta_1,\beta_2,\kappa \in \mathbb R.$ For $z= \binom{z_1}{z_2} \in \mathbb C^2$ define
$$\langle Az,z \rangle = \beta_1 z_1^2 + 2\kappa z_1 z_2 + \beta_2 z_2^2, $$
and let
$$H_{A} = \left \{ f \, {\text{entire}} : 
 \int_{\mathbb C^2} |f(z)|^2\,\exp (-\Re \langle Az,z \rangle -|z|^2)\, d\lambda (z) <\infty  \right \}.$$

 Using the canonical orthonormal basis of $H_A$ we investigate the densely defined operators
 $a_j(f) = \frac{\partial f}{\partial z_j}$ and $b_j(f)= z_jf$ on $H_A,$ and establish analogous estimates to the uncertainty principle \eqref{fock}.
\vskip 0.5 cm
\section{The FBI-Bargmann transform}
By means of the FBI-Bargmann transform one obtains a correspondence between Weyl operators acting on $L^2(\mathbb R^n)$ and Weyl operators acting on $H_A,$ see \cite{RSN}. In \cite{ACDKS} a similar approach is used for some other interesting applications. 
We will use the canonical orthonormal basis in $H_A$ to compute the adjoint operators. We apply Parseval's identity, and it will be important to know that convergence in the norm of $H_A$ implies convergence of entire functions in the compact open topology, which follows from

\begin{lemma}\label{lemm1} Let  $\varphi : \mathbb C^n \longrightarrow  \mathbb R$ be a real valued continuous function and define
$$A^2(\mathbb C^n, e^{-\varphi }) = \{ f \, {\text{entire}} \, : 
 \| f \|^2= \int_{\mathbb C^n} |f(z)|^2 e^{-\varphi (z)}\, d\lambda (z) <\infty \}.$$
Then, for each compact subset $K\subset \mathbb C^n,$ there exists a constant $C_K>0$ such that
\begin{equation}\label{compconv}
\sup_{z\in K}|f(z)| \le C_K \|f\|,
\end{equation} 
for each $f\in A^2(\mathbb C^n, e^{-\varphi }).$
\end{lemma}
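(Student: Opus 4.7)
The plan is to prove the pointwise bound by combining the sub\-mean-value property of the plurisubharmonic function $|f|^2$ with a lower bound on the weight obtained from continuity.

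First I would fix a compact set $K \subset \mathbb C^n$ and choose a radius $r > 0$. Let $K_r = \{ w \in \mathbb C^n : \operatorname{dist}(w,K) \le r\}$, which is again compact. Since $\varphi$ is continuous on $\mathbb C^n$, it is bounded on $K_r$; in particular there is a constant $M = M_{K,r} > 0$ such that
$$e^{-\varphi(w)} \ge M \quad \text{for all } w \in K_r.$$

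Next I would use the fact that $f$ is entire, so $|f|^2$ is plurisubharmonic (indeed subharmonic as a function on $\mathbb C^n \cong \mathbb R^{2n}$). The sub-mean-value inequality on the polydisc or Euclidean ball of radius $r$ centered at $z \in K$ gives
$$|f(z)|^2 \le \frac{1}{V_{2n}(r)} \int_{B(z,r)} |f(w)|^2 \, d\lambda(w),$$
where $V_{2n}(r)$ denotes the volume of the Euclidean ball of radius $r$ in $\mathbb C^n$. Because $B(z,r) \subset K_r$ for every $z \in K$, inserting the weight bound yields
$$|f(z)|^2 \le \frac{1}{M\, V_{2n}(r)} \int_{B(z,r)} |f(w)|^2 e^{-\varphi(w)} \, d\lambda(w) \le \frac{1}{M\, V_{2n}(r)} \|f\|^2.$$

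Taking square roots and a supremum over $z \in K$ yields the required inequality with
$$C_K = \bigl(M\, V_{2n}(r)\bigr)^{-1/2}.$$
There is no real obstacle here: the mean value inequality and continuity of $\varphi$ do all the work. The only minor care needed is to pick the enlarged compact set $K_r$ so that all the balls appearing in the mean value inequality remain inside a set on which the weight is bounded below, which is the reason for introducing the auxiliary radius $r > 0$ in the first step.
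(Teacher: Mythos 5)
Your proof is correct. It follows the same basic strategy as the paper's --- a mean-value inequality on a ball of radius $r$ about $z\in K$, combined with control of the continuous weight on the compact neighbourhood $K_r$ --- but the execution differs in two respects. The paper applies the mean-value property to $f$ itself (stated for $n=1$, with the general case deferred to an iteration of Cauchy's formula in each variable) and then uses Cauchy--Schwarz after splitting the integrand as $e^{\varphi/2}\,|f|\,e^{-\varphi/2}$, arriving at the constant $\frac{1}{\pi r^2}\bigl(\int_{D(z,r)}e^{\varphi}\,d\lambda\bigr)^{1/2}$; to get a single $C_K$ one must still note that this quantity is uniformly bounded for $z\in K$, which the paper leaves implicit. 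You instead apply the sub-mean-value inequality to the subharmonic function $|f|^2$ directly on $\mathbb C^n\cong\mathbb R^{2n}$ and insert the uniform lower bound $e^{-\varphi}\ge M$ on $K_r$, which avoids both the Cauchy--Schwarz step and the reduction to one variable, and makes the uniformity in $z\in K$ explicit from the outset via the enlarged compact set. The resulting constants differ slightly ($\sup_{K_r}e^{\varphi}$ versus an integral of $e^{\varphi}$ over a ball), but both are finite by continuity of $\varphi$, so either route proves the lemma.
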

\begin{proof} We consider only the case $n=1,$ as iterations of Cauchy's theorem imply the general result: 
\begin{eqnarray*}
|f(z)| &\le & \frac{1}{\pi r^2} \int_{D(z,r)} e^{\varphi (w)/2} |f(w)| e^{-\varphi (w)/2} \,d\lambda (w)\\
& \le & \frac{1}{\pi r^2} \left ( \int_{D(z,r)} e^{\varphi (w)} \,d\lambda (w) \right )^{1/2} \, \| f\|.
\end{eqnarray*}

\end{proof}
Next we indicate that the spaces $H_A$ and $A^2(\mathbb C^n, e^{-|z|^2})$ are isometric isomorphic. The isometry 
$$\Phi : A^2(\mathbb C^n, e^{-|z|^2}) \longrightarrow H_A$$

is given by $\Phi (f) (z) = f(z)  \exp \left ( \frac{1}{2} \langle Az,z \rangle \right ),$ for $f\in A^2(\mathbb C^n, e^{-|z|^2}).$ 

 The functions 
 $$\phi_\alpha (z)=\frac{z_1^{\alpha_1}}{\sqrt{\pi \alpha_1 !}}
\, \frac{z_2^{\alpha_2}}{\sqrt{\pi \alpha_2 !}},$$
$\alpha_1, \alpha_2 =0,1,2,\dots.$
form the canonical orthonormal basis for $A^2(\mathbb C^2, e^{-|z|^2}).$
 Then the functions
 $$\psi_\alpha (z) = \frac{z_1^{\alpha_1}}{\sqrt{\pi \alpha_1 !}}
\, \frac{z_2^{\alpha_2}}{\sqrt{\pi \alpha_2 !}}\, \exp \left ( \frac{1}{2} \langle Az,z \rangle \right ),$$
constitute an orthonormal basis in $H_A:$ for the inner product in $H_A$ we have
$$(\psi_\alpha, \psi_{\alpha'}) = \int_{\mathbb C^2}
\psi_\alpha (z)
\left( \psi_{\alpha'} (z)\right )^- \exp (-\Re \langle Az,z \rangle -|z|^2)\, d\lambda (z)= (\phi_\alpha, \phi_{\alpha'}),$$
where the last inner product is the inner product of the Segal Bargmann space.

If $f\in H_A,$ we write $f= \sum_\alpha f_\alpha \psi_\alpha,$ where $(f_\alpha)_\alpha \in l^2.$ Define
$${\text{dom}}(a_j) = \{ f\in H_A : a_j(f) \in H_A \}, \, j=1,2.$$
Similarly, let $b_j(f) =z_jf$ and
$${\text{dom}}(b_j) = \{ f\in H_A : b_j(f) \in H_A \}, \, j=1,2.$$
The operators $a_j, b_j$ are densely defined unbounded operators on $H_A$ with closed graph, which implies that the domains of $a_j^*$ and $b_j^*$ are also dense in $H_A;$ (see \cite{hasbook}).

We compute the images of the basis elements under $a_j$ and $b_j.$
We have
\begin{equation}\label{b_j}
b_1(\psi_\alpha ) = \sqrt{\alpha_1 +1}\,  \psi_{\alpha_1+1, \alpha_2} \, , \, b_2(\psi_\alpha ) = \sqrt{\alpha_2 +1}\,  \psi_{\alpha_1, \alpha_2+1}.  
\end{equation}

For $a_j$ we obtain 
\begin{equation}\label{a_1}
a_1(\psi_\alpha ) = \sqrt{\alpha_1}\, \psi_{\alpha_1-1,\alpha_2} + \beta_1 \sqrt{\alpha_1+1 }\, \psi_{\alpha_1+1,\alpha_2}+ \kappa \sqrt{\alpha_2 +1}\,
\psi_{\alpha_1, \alpha_2+1}
\end{equation}
and
\begin{equation}\label{a_2}
a_2(\psi_\alpha ) = \sqrt{\alpha_2}\, \psi_{\alpha_1,\alpha_2-1} + \beta_2 \sqrt{\alpha_2+1 }\, \psi_{\alpha_1,\alpha_2+1} + \kappa \sqrt{\alpha_1+1}\, \psi_{\alpha_1+1, \alpha_2}.
\end{equation}

 Now let $f\in {\text{dom}}(a_1)$ with $f= \sum_\alpha f_\alpha \psi_\alpha,$ where $(f_\alpha)_\alpha \in l^2.$ 
 
 Now we suppose that $\kappa =0.$ We use \eqref{lemm1} and get from \eqref{a_1}
\begin{equation}\label{a_f}    
 a_1(f)= \sum_\alpha \sqrt{\alpha_1+1}f_{\alpha_1+1,\alpha_2} \psi_{\alpha_1,\alpha_2} + \beta_1 \sum_{\alpha_1\ge 1, \alpha_2 \ge 0}\sqrt{\alpha_1} f_{\alpha_1-1,\alpha_2} \psi_{\alpha_1, \alpha_2}.
\end{equation}
 If $g=\sum_\alpha g_\alpha \psi_\alpha \in {\text{dom}}(a_1^*)$ we get $(a_1(f),g)=(f, a_1^*(g)).$ Comparing with the formula from above 
 \begin{equation}\label{a_*g}
 a_1^*(g)=\sum_{\alpha_1\ge 1, \alpha_2 \ge 0}
 \sqrt{\alpha_1}g_{\alpha_1-1, \alpha_2}\psi_{\alpha_1, \alpha_2}+ \beta_1 \sum_\alpha \sqrt{\alpha_1+1} g_{\alpha_1+1,\alpha_2} \psi_{\alpha_1, \alpha_2}.
 \end{equation}
If $\beta =1$ it follows that $a_1=a_1^*.$ For general $\beta_1 \in \mathbb R$ we compute the norms using Parseval's identity
\begin{eqnarray*}
\| a_1^*(f)\|^2 - \|a_1(f) \|^2 &=& \sum_{\alpha_2} |f_{1,\alpha_2}|^2 (\beta_1^2 -1)\\
&+&\sum_{\alpha_2}|f_{0, \alpha_2}+\beta_1 \sqrt{2}f_{2,\alpha_2}|^2-\sum_{\alpha_2}|\sqrt{2}f_{2, \alpha_2}+\beta_1 f_{0,\alpha_2}|^2\\
&+& \sum_{\alpha_2}|\sqrt{2}f_{1, \alpha_2}+\beta_1 \sqrt{3}f_{3,\alpha_2}|^2-\sum_{\alpha_2}|\sqrt{3}f_{3, \alpha_2}+\beta_1 \sqrt{2}f_{1,\alpha_2}|^2\\
& + & \dots \\
&=& (1-\beta_1^2) \left ( \sum_{\alpha2}|f_{0,\alpha_2}|^2 +\sum_{\alpha2}|f_{1,\alpha_2}|^2 + \dots  \right )\\
& = & (1-\beta_1^2) \|f \|^2.
\end{eqnarray*}

Hence ${\text{dom}}(a_1)={\text{dom}}(a_1^*).$

Using \eqref{b_j} we obtain
\begin{equation}\label{b_jf}
b_1(f)=\sum_{\alpha_1\ge 1, \alpha_2 \ge 0}\sqrt{\alpha_1} f_{\alpha_1-1, \alpha_2} \psi_{\alpha_1, \alpha_2} \,\,  {\text{and}} \,\,  b_1^*(f)= \sum_\alpha \sqrt{\alpha_1+1} f_{\alpha_1+1, \alpha_2}\psi_{\alpha_1, \alpha_2}.     
\end{equation}
Analogous results can be obtained for $a_2^*$ and $b_2^*.$ Putting together all these results we get the following

\begin{theorem}\label{thm1}
Let
$$A=\begin{pmatrix}\beta_1 & 0\\ 0 & \beta_2 \end{pmatrix}, $$
with $\beta_1,\beta_2 \in \mathbb R.$
Then the operators $a_j$ and $b_j$ on $H_A$ have the following properties: 
$$\| a_j^*(f)\|^2 - \|a_j(f) \|^2= (1-\beta_j^2) \|f\|^2, \ {\text{dom}}(a_j)={\text{dom}}(a_j^*),$$

$$\| b_j(f) \|^2 - \|b_j^*(f)\|^2 = \|f\|^2,  \ {\text{dom}}(b_j)={\text{dom}}(b_j^*), \, j=1,2.$$

\begin{equation}\label{dom1}
a_j^*(f) = \beta_j a_j(f)+ (1-\beta_j^2)b_j(f),
\end{equation}
for $f \in {\text{dom}}(a_j)\cap{\text{dom}}(b_j).$ 

The commutation relations are 
$$[a_j, a_j^*]=a_j a_j^* - a_j^* a_j= (1-\beta_j^2) I,$$
in addition, we have $[a_j,a_k]=0$ and
$[a_j^*,a_k^*]=0,$ for $j,k=1,2.$

If $\beta_j \neq1,$ we get the basic estimate
$$\|f\|^2 \le \frac{1}{|1-\beta_j^2|} \left (\|a_j^*(f)\|^2+\|a_j(f)\|^2 \right).$$
\end{theorem}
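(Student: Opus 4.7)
My plan is to reduce every assertion to an explicit computation on the orthonormal basis $\{\psi_\alpha\}$ combined with Parseval's identity, since the excerpt has already supplied the formulas \eqref{a_1}--\eqref{a_*g} and \eqref{b_jf} for the actions of $a_j$, $a_j^*$, $b_j$, $b_j^*$ on $\ell^2$-expansions, and Lemma \ref{lemm1} justifies the termwise manipulations. The case $j=1$ of the identity $\|a_1^*(f)\|^2-\|a_1(f)\|^2=(1-\beta_1^2)\|f\|^2$ is already carried out via a telescoping rearrangement in the excerpt, and the $j=2$ case is entirely symmetric, swapping the roles of the two indices and using \eqref{a_2}. The equality ${\text{dom}}(a_j)={\text{dom}}(a_j^*)$ then drops out at once, since the two squared norms differ by the finite quantity $(1-\beta_j^2)\|f\|^2$. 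The companion identity for $b_j$ is cleaner still: from \eqref{b_jf} one reads off $\|b_1(f)\|^2=\sum_\alpha(\alpha_1+1)|f_\alpha|^2$ and $\|b_1^*(f)\|^2=\sum_\alpha \alpha_1|f_\alpha|^2$, so subtracting yields $\|f\|^2$ in a single line.

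To establish the pointwise identity $a_j^*(f)=\beta_j a_j(f)+(1-\beta_j^2)b_j(f)$ on ${\text{dom}}(a_j)\cap{\text{dom}}(b_j)$, I would compare actions on a single basis element. Combining \eqref{a_1} (with $\kappa=0$) and \eqref{b_j},
\[
\beta_1 a_1(\psi_\alpha)+(1-\beta_1^2)b_1(\psi_\alpha)=\beta_1\sqrt{\alpha_1}\,\psi_{\alpha_1-1,\alpha_2}+\bigl[\beta_1^2+(1-\beta_1^2)\bigr]\sqrt{\alpha_1+1}\,\psi_{\alpha_1+1,\alpha_2},
\]
which coincides with $a_1^*(\psi_\alpha)$ as read off from \eqref{a_*g} applied to $g=\psi_\alpha$; linearity then extends the identity to the common domain. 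For the commutator $[a_j,a_j^*]=(1-\beta_j^2)I$, I would apply $a_1a_1^*$ and $a_1^*a_1$ to $\psi_\alpha$ separately using the two-term formulas: the off-diagonal contributions at $\psi_{\alpha_1\pm 2,\alpha_2}$ appear in both compositions with identical coefficients $\beta_1\sqrt{\alpha_1(\alpha_1-1)}$ and $\beta_1\sqrt{(\alpha_1+1)(\alpha_1+2)}$ and so cancel in the difference, leaving the diagonal coefficient $(\alpha_1+1)+\beta_1^2\alpha_1-\alpha_1-\beta_1^2(\alpha_1+1)=1-\beta_1^2$. The cross-commutators $[a_j,a_k]=[a_j^*,a_k^*]=0$ for $j\neq k$ hold trivially, because with $\kappa=0$ the shifts executed by $a_1$ and $a_2$ act on the two independent indices $\alpha_1$ and $\alpha_2$, so the order of composition is immaterial.

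The basic estimate is an immediate corollary of the first identity: regardless of the sign of $1-\beta_j^2$, discarding a nonnegative term gives $|1-\beta_j^2|\|f\|^2\le \|a_j(f)\|^2+\|a_j^*(f)\|^2$. I expect the main obstacle to be purely bookkeeping --- tracking summation ranges ($\alpha_1\ge 0$ versus $\alpha_1\ge 1$) when reindexing the telescoping sums, and keeping the off-diagonal cancellations straight in the commutator computation --- rather than any genuine analytical subtlety; once the indices are handled correctly, the theorem falls directly out of the basis formulas already established.
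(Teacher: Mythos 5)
Your proposal is correct and follows essentially the same route as the paper: the norm identities via the telescoping Parseval computation already displayed before the theorem, \eqref{dom1} by comparing coefficients from \eqref{a_f}, \eqref{a_*g} and \eqref{b_jf}, and the basic estimate as an immediate consequence. The only (cosmetic) divergence is the commutator, which the paper gets in one line from $a_1^*=\beta_1 a_1+(1-\beta_1^2)b_1$ together with $[a_1,b_1]=I$, whereas you verify it directly on the basis vectors $\psi_\alpha$ --- both computations are equally valid, and your appeal to ``linearity'' for extending \eqref{dom1} is harmless here only because the coefficientwise formulas you cite already hold for arbitrary $f$ in the relevant domains.
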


\begin{proof}
\eqref{dom1} follows from \eqref{a_f}, \eqref{a_*g} and \eqref{b_jf}.
To show the commutation relations we use the fact that
$a_1^* = \beta_1 a_1+ (1- \beta_1^2)b_1$ and that $[a_1,b_1]=I$ and easily 
compute $[a_1,a_1^*].$ 
\end{proof}
\begin{remark}\label{skew}
    In a similar way one can handle the case where 
    $$A=\begin{pmatrix}0 & \kappa\\ \kappa & 0 \end{pmatrix},$$  we get
$$a_1(f)-\kappa a_2^*(f) = (1-\kappa^2)b_1^*(f) \, \  {\text{and}} \,  \ a_2(f)-\kappa a_1^*(f) = (1-\kappa^2)b_2^*(f),$$
the commutation relations are 
$$[a_j, a_k^*]= \delta_{jk}(1-\kappa^2) I, \, j,k=1,2, \, $$
and $[a_j^*,a_k^*] =0, j,k=1,2.$

If  $\kappa=1$ we get $a_1(f)=a_2^*(f), $
whereas, if $\kappa=-1,$ we obtain $a_1(f)= -a_2^*(f).$
\end{remark}
\vskip 1 cm
\section{The Berezin transform}

The Berezin transform is  effective and successful in the study of Hankel and Toepltiz operators, see \cite{Zhu}.
We use the methods of chapter 2 to compute the Berezin transform of the operator $a$ on 
$$H_\beta = \{ f \ {\text{entire}} : \int_{\mathbb C} |f(z)|^2 \exp{-((1+\beta)x^2 +(1-\beta)y^2))}\, d\lambda (z) < \infty \},$$
where $0\le \beta \le 1$ and $z=x+iy.$
The functions
$$\psi_k (z) = \frac{z^k}{\sqrt{\pi k!}}\, \exp(\frac{\beta}{2}z^2), \ k=0,1, \dots $$
constitute an orthonormal basis in $H_\beta.$ Hence the  Bergman kernel $K_\beta$ of $H_\beta$ is
$$K_\beta (z,w)= \sum_{k=0}^\infty \psi_k(z) (\psi_k(w))^-= \frac{1}{\pi} \exp ( \frac{\beta}{2}(z^2+ \bar w^2))\exp (z\bar w)$$
and 
$$k_w(z) = \frac{K_\beta (z,w)}{\sqrt{K_\beta (w,w)}}.$$

We get 
$$k_w(z) = \frac{1}{\sqrt{\pi}} \, \exp ( \frac{\beta}{2}(z^2+ \frac{1}{2}\bar w^2- \frac{1}{2}w^2)+z\bar w -\frac{|w|^2}{2}) .$$
In addition, we have 
$$a(k_w)(z)=\frac{1}{\sqrt{\pi}} \, \exp ( \frac{\beta}{2}(z^2+ \frac{1}{2}\bar w^2- \frac{1}{2}w^2)+z\bar w -\frac{|w|^2}{2}) \, (\beta z+ \bar w). $$
It is easy to see that $k_w \in {\text{dom}}(a).$

The Berezin transform $\Tilde{a}$ of the operator $a$ on $H_\beta$ is given by
$$\Tilde{a}(w) =( a(k_w), k_w)_\beta$$
$$= \frac{1}{\pi}
\int_{\mathbb C} \exp ( \beta (x^2-y^2)+ 2xu+2yv-u^2-v^2)\, (\beta z +\bar w) \exp (-(1+\beta)x^2-(1-\beta)y^2)\, d\lambda (z)$$
$$=\frac{1}{\pi} \int_{\mathbb C} (\beta z +\bar w)\, \exp (-(x-u)^2-(y-v)^2)\, d\lambda (z),$$
where $z=x+iy$ and $w=u+iv).$

Finally we get
$$\Tilde a (w) = \beta w + \bar w.$$

For the adjoint operator $a^*$ one has
$$\Tilde{a^*}(w) = (a^*(k_w), k_w)_\beta = (k_w, a(k_w))_\beta =  (\Tilde{a}(w))^-.$$
Notice that the Berezin transform of $a$ on the Segal-Bargmann space ($\beta =0$) is $\Tilde{a}(w)= \bar w.$

As $a=a^*$ on $H_1,$ the Berezin transform of $a$ must be real-valued.
In fact, on $H_1$ we have $\Tilde{a}(w) = w + \bar w.$

The Berezin transform of $ab$ and $ba$ on $H_\beta$ is given by
$$\Tilde{ab}(w)= 1+ w^2 + |w|^2,
\ \Tilde{ba}(w)=w^2 + |w|^2.$$

\vskip 1 cm
The spaces $H_\beta$ can be generalized in the following way, see \cite{JPR}.

Let $\kappa >0$ and $l\in \mathbb C.$ Define
$$H_{\kappa, l}= \{ f \, {\text{entire}} \, : \| f \|_{\kappa, l}^2 = \int_{\mathbb C} |f(z)|^2\, \exp (-\kappa |z|^2 + \Re (lz^2))\, d\lambda (z) <\infty \}.$$
The functions 
$$\psi_j(z)= \frac{(\sqrt{\kappa} z)^j}{\sqrt{\pi j!}} \, e^{-lz^2/2}\ \ , \, j=0,1,\dots$$
constitute an orthonormal basis in $H_{\kappa, l}.$
The Bergman kernel of $H_{\kappa, l}$ is
$$K_{\kappa, l}(z,w) = \frac{1}{\pi}\, e^{-l(z^2+\bar w^2)/2} \, e^{\kappa z\bar w}.$$
In order to compute the Berezin transform of the operator $a$ on $H_{\kappa, l}$ we have to consider
$$k_{\kappa, l,w} (z) = \frac{K_{\kappa, l}(z,w)}{\sqrt{K_{\kappa, l}(w,w)}}.$$
We get 
$$k_{\kappa, l, w} (z)= \frac{1}{\sqrt{\pi}}\, \exp (-lz^2/2-l\bar w^2/2+ \kappa z \bar w + l w^2/4+ l\bar w^2/4- \kappa |w|^2/2),$$
as before, we have $k_{\kappa, l, w}\in {\text{dom}}(a),$ so the Berezin transform of $a$ has the form
$$\Tilde a(w) = (a(k_{\kappa, l,w}),k_{\kappa, l,w})_{\kappa, l},
$$
and an easy computation shows
$$\Tilde{a}(w)=\int_{\mathbb C} (-lz+ \kappa \bar w) \, \exp (\frac{1}{2}\Re (lw^2)-\frac{1}{2}\Re (l\bar w^2)) \exp (-\kappa (x-u)^2-
\kappa (y-v)^2)\, d\lambda (z), $$
where we set $z=x+iy$ and $w=u+iv.$ Finally we obtain
$$\Tilde{a}(w) = \bar w \exp (\frac{1}{2}\Re (lw^2)-\frac{1}{2}\Re (l\bar w^2)) - \frac{l w}{\kappa}.$$
Comparing this with the Berezin transform of $a$ in $H_\beta$ we have to set $\kappa =1 $ and $l= -\beta$ and get
$$\Tilde{a}(w) = \bar w + \beta w.$$

\section{Szeg\H{o} and Bergman kernels}

We compute the Szeg\H{o} kernel of
$$S_{\kappa, l}= \{ (\zeta_1, \zeta_2)\in \mathbb C^2 : \Im \zeta_2 = \kappa |\zeta_1|^2-\Re (l \zeta_1^2) \},$$
where $\kappa >0$ and $l \in \mathbb C.$ Hypersurfaces of this type are used as model domains for the Leray transform (
\cite{BarEdh20}). For this purpose we use the Bergman kernel $K_{\tau,\kappa, l}$ of 
$$H_{\tau,\kappa, l}
= \{ f \, {\text{entire}} \, : \| f \|_{\tau,\kappa, l}^2 = \int_{\mathbb C} |f(z)|^2\, \exp (-4\pi \tau(\kappa |z|^2 + \Re (lz^2)))\, d\lambda (z) <\infty \}.$$ 
and a formula reducing the Szeg\H{o} kernel of a hypersurface in $\mathbb C^2$ to the parameter family of Bergman kernels in spaces of entire functions in one variable, see \cite{Has94}, \cite{Has98}, \cite{Nag1}.
 
The Bergman kernel of $H_{\tau,\kappa, l}$ is
$$K_{\tau, \kappa, l}(z,w) = 4\tau \exp (-2\pi \tau l (z^2+\bar w^2)+ 4\pi \tau \kappa z \bar w).$$
Let $\mathcal D_{\kappa, l}= \{ (z_1,z_2) \in \mathbb C^2 : \Im (z_2) > \kappa |z_1|^2 + \Re (l z_1^2)\}.$
Using \cite{Has98}, the Szeg\H{o} kernel can be computed as 
$$S((z',z),(w',w))= \int_0^\infty K_{\tau, \kappa, l} (z',w') e^{-2\pi i \tau(\bar w-z)}\, d\tau,$$

which implies

$$S((z',z),(w',w))= \frac{\kappa}{\pi^2} \, ( i (\bar w-z)-2\kappa z'\bar w' + l (z'^2 + \bar w'^2))^{-2},$$
where $(z',z),(w',w)\in \mathcal D_{\kappa, l}.$

\end{document}